\documentclass[12 pt]{amsart}
\usepackage{amsmath}
\usepackage{amssymb}
\usepackage{enumerate}
\usepackage{comment}
\usepackage{hyperref}
\usepackage[letterpaper, top = 3.3cm, bottom = 3.3cm,right = 3.1cm, left = 3.1cm]{geometry}
\sloppy

\numberwithin{equation}{section}

\numberwithin{equation}{section}


\theoremstyle{theorem}
\newtheorem{theorem}{Theorem}[section]
\newtheorem{lemma}[theorem]{Lemma}
\newtheorem{proposition}[theorem]{Proposition}
\newtheorem{corollary}[theorem]{Corollary}

\theoremstyle{definition}
\newtheorem{definition}[theorem]{Definition}

\theoremstyle{remark}
\newtheorem*{remark}{Remark}

\numberwithin{equation}{section}
\begin{document}

\title{A note on species realizations and nondegeneracy of potentials}

\author{Daniel L\'{o}pez-Aguayo}
 \address{Tecnologico de Monterrey, Escuela de Ingenier\'{i}a y Ciencias, Hidalgo, M\'{e}xico.}
\email{dlopez.aguayo@itesm.mx}

\maketitle

\begin{abstract} In this note we show that a mutation theory of species with potential can be defined so that a certain class of skew-symmetrizable integer matrices have a species realization admitting a non-degenerate potential. This gives a partial affirmative answer to a question raised by Jan Geuenich and Daniel Labardini-Fragoso. We also provide an example of a class of skew-symmetrizable $4 \times 4$ integer matrices, which are not globally unfoldable nor strongly primitive, and that have a species realization admitting a non-degenerate potential.
\end{abstract}

\section{Introduction}  In \cite[p.14]{6}, motivated by the seminal paper \cite{5} of Derksen-Weyman-Zelevinsky, J. Geuenich and D. Labardini-Fragoso raise the following question: \\

\textbf{Question} \cite[Question 2.23]{6} Can a mutation theory of species with potential be defined so that every skew-symmetrizable matrix $B$ have a species realization which admit a non-degenerate potential? \\

In \cite{1}, we show that for every skew-symmetrizable matrix $B=(b_{ij})_{i,j} \in \mathbb{Z}^{n \times n}$ that admits a skew-symmetrizer $D=\operatorname{diag}(d_{1},\hdots, d_{n})$, with the property that each $d_{j}$ divides each and every element $b_{ij}$, then for every finite sequence $k_{1}, \hdots, k_{l}$ of elements of $\{1,\hdots, n\}$, there exists a species realization of $B$ and a potential $P$ on this species such that all the iterated mutations $\bar{\mu}_{k_{1}}P$, $\bar{\mu}_{k_{2}}\bar{\mu}_{k_{1}}P, \hdots, \bar{\mu}_{k_{l}} \cdots \bar{\mu}_{k_{1}}P$ are $2$-acyclic. 

In \cite{7}, D. Labardini-Fragoso and A. Zelevinsky give a partially positive answer to Question $2.23$ provided that the skew-symmetrizer has pairwise coprime diagonal entries. We remark that this is a stronger condition than the one we impose in \cite{1}. Indeed, since $DB$ is skew-symmetric then $-d_{j}b_{ji}=d_{i}b_{ij}$. Using the fact that $b_{ji}$ is an integer and that $(d_{i},d_{j})=1$, it follows that $d_{j}$ divides $b_{ij}$, as claimed. 

In Theorem \ref{theo1} and Corollary \ref{cor} we give a partially affirmative answer to Question $2.23$ by proving the following: let $B=(b_{ij}) \in \mathbb{Z}^{n \times n}$ be a skew-symmetrizable matrix with skew-symmetrizer $D=\operatorname{diag}(d_{1},\ldots,d_{n})$. If $d_{j}$ divides $b_{ij}$ for every $j$ and every $i$, then the matrix $B$ can be realized by a species that admits a non-degenerate potential.

Finally, in Section 5 we give an example of a class of skew-symmetrizable $4 \times 4$ integer matrices which are not globally unfoldable nor strongly primitive (in the sense of \cite[Definition 14.1]{7}), and that have a species realization admitting a non-degenerate potential. This gives an example of a class of skew-symmetrizable $4 \times 4$ integer matrices which are not covered by \cite{7}. 

\section{Preliminaries}
The following material is taken from \cite{1}. 
\begin{definition} Let $F$ be a field and let $D_{1},\hdots,D_{n}$ be division rings, each containing $F$ in its center and of finite dimension over $F$. Let  $S=\displaystyle \prod_{i=1}^{n} D_{i}$ and let $M$ be an $S$-bimodule of finite dimension over $F$. Define the \emph{algebra of formal power series} over $M$ as the set: 
\begin{center}
$\mathcal{F}_{S}(M):=\left\{\displaystyle \sum_{i=0}^{\infty} a(i): a(i) \in M^{\otimes i}\right\}$
\end{center}
\end{definition} 
where $M^{0}=S$. 
Note that $\mathcal{F}_{S}(M)$ is an associative unital $F$-algebra where the product is the one obtained by extending the product of the tensor algebra $T_{S}(M)=\displaystyle \bigoplus_{i=0}^{\infty} M^{\otimes i}$.

Let $\{e_{1},\ldots,e_{n}\}$ be a complete set of primitive orthogonal idempotents of $S$.

\begin{definition} An element $m \in M$ is \emph{legible} if $m=e_{i}me_{j}$ for some idempotents $e_{i},e_{j}$ of $S$.
\end{definition}

\begin{definition} Let $Z=\displaystyle \sum_{i=1}^{n} Fe_{i} \subseteq S$. We say that $M$ is $Z$-\emph{freely generated} by a $Z$-subbimodule $M_{0}$ of $M$ if the map $\mu_{M}: S \otimes_{Z} M_{0} \otimes_{Z} S \rightarrow M$ given by $\mu_{M}(s_{1} \otimes m \otimes s_{2})=s_{1}ms_{2}$ is an isomorphism of $S$-bimodules. In this case we say that $M$ is an $S$-bimodule which is $Z$-freely generated.
\end{definition}

Throughout this paper we will assume that $M$ is $Z$-freely generated by $M_{0}$. 

\begin{definition}
Let $A$ be an associative unital $F$-algebra. A cyclic derivation, in the sense of Rota-Sagan-Stein \cite{9}, is an $F$-linear function $\mathfrak{h}:A\rightarrow \mathrm{End}_{F}(A)$ such that:
\begin{equation}
\mathfrak{h}(f_{1}f_{2})(f)=\mathfrak{h}(f_{1})(f_{2}f)+\mathfrak{h}(f_{2})(ff_{1})
\label{eq1}
\end{equation}
for all $f,f_{1},f_{2} \in A$.
\end{definition}

\begin{definition} Let $A$ be an associative unital $F$-algebra. Given a cyclic derivation $\mathfrak{h}: A\rightarrow \mathrm{End}_{F}(A)$, we define the associated cyclic derivative $\delta: A \rightarrow A$ as 
$\delta (f)=\mathfrak{h}(f)(1)$.
\end{definition}

From \eqref{eq1} one obtains:
\begin{equation}
\delta (f_{1}f_{2})=\mathfrak{h}(f_{1})(f_{2})+\mathfrak{h}(f_{2})(f_{1})
\end{equation}
for all $f_{1}, f_{2}\in A$. In particular, $\delta (f_{1}f_{2})=\delta (f_{2}f_{1})$. \\

We now construct a cyclic derivative on $\mathcal{F}_{S}(M)$. First, we define a cyclic derivation on the tensor algebra $A=T_{S}(M)$ as follows. Consider the map
\begin{center}
$\hat{u}:A\times A\rightarrow A$
\end{center}
given by $\hat{u}(f,g)=\displaystyle \sum _{i=1}^{n}e_{i}gfe_{i}$ for every $f,g\in A$; this is an $F$-bilinear map which is $Z$-balanced. Therefore, there exists $u: A \otimes_{Z} A \rightarrow A$ such that $u(a \otimes b)=\hat{u}(a,b)$.
Now we define an $F$-derivation $\Delta: A\rightarrow A\otimes _{Z}A$ as follows: for $s\in S$, we define $\Delta (s)=1\otimes s-s\otimes 1$; for $m\in M_{0}$, we let $\Delta (m)=1\otimes m$. Then we define $\Delta: M\rightarrow  T_{S}(M)$ such that for $s_{1},s_{2}\in S$ and $m\in M_{0}$, we have
\begin{center}
$\Delta (s_{1}ms_{2})=\Delta (s_{1})ms_{2}+s_{1}\Delta (m)s_{2}+s_{1}m\Delta (s_{2}).$
\end{center}
The above map is well defined because $M\cong S\otimes _{Z}M_{0}\otimes _{Z}S$ via  the multiplication map $\mu_{M}$.
Now $\Delta $ can be extended to an  $F$-derivation on $A$.   

We define $\mathfrak{h}:A\rightarrow \mathrm{End}_{F}(A)$ as follows 
\begin{center}
$\mathfrak{h}(f)(g)=u(\Delta (f)g).$
\end{center}
We have:
\begin{align*}
\mathfrak{h}(f_{1}f_{2})(f)&=u(\Delta (f_{1}f_{2})f) \\
&=u(\Delta (f_{1})f_{2}f)+u(f_{1}\Delta (f_{2})f) \\
&=u(\Delta (f_{1})f_{2}f)+u(\Delta (f_{2})ff_{1}) \\
&=\mathfrak{h}(f_{1})(f_{2}f)+\mathfrak{h}(f_{2})(ff_{1}).
\end{align*}

Therefore $\mathfrak{h}$ is a cyclic derivation on $T_{S}(M)$. We now extend $\mathfrak{h}$ to $\mathcal{F}_{S}(M)$: take $f,g \in \mathcal{F}_{S}(M)$, then we have $\mathfrak{h}(f(i))(g(j))\in M^{\otimes i+j}$; thus we define $\mathfrak{h}(f)(g)(l)=\displaystyle \sum _{i+j=l}\mathfrak{h}(f(i))(g(j))$ for every non-negative integer $l$.
\begin{proposition} The $F$-linear map $\mathfrak{h}:\mathcal{F}_{S}(M)\rightarrow \mathrm{End}_{F}(\mathcal{F}_{S}(M))$ is a cyclic derivation.
\end{proposition}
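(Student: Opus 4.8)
The plan is to verify the defining identity
\[
\mathfrak{h}(f_{1}f_{2})(f)=\mathfrak{h}(f_{1})(f_{2}f)+\mathfrak{h}(f_{2})(ff_{1})
\]
for all $f,f_{1},f_{2}\in\mathcal{F}_{S}(M)$ by checking it in each homogeneous component and reducing every such component to the identity already established on the tensor algebra $T_{S}(M)$. Since $\mathfrak{h}$ on $\mathcal{F}_{S}(M)$ is defined purely componentwise through $\mathfrak{h}(f)(g)(l)=\sum_{i+j=l}\mathfrak{h}(f(i))(g(j))$, two elements of $\mathcal{F}_{S}(M)$ agree as soon as their components agree in every degree $l$, so it suffices to compare degree-$l$ parts.

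First I would record the grading data that makes the extension well defined and the bookkeeping legitimate. The derivation $\Delta$ preserves total tensor degree (each of $\Delta(s)$ and $\Delta(m)$ does, and $\Delta$ is built by the Leibniz rule), and $u$ merely reassembles the two tensor slots into one, so for homogeneous $f\in M^{\otimes i}$ and $g\in M^{\otimes j}$ one gets $\mathfrak{h}(f)(g)\in M^{\otimes i+j}$, exactly as asserted just before the statement. Likewise $f_{1}(p)f_{2}(q)\in M^{\otimes p+q}$. Consequently the sum defining $\mathfrak{h}(f)(g)(l)$ runs over the finitely many pairs $(i,j)$ with $i+j=l$ and lands in $M^{\otimes l}$, so $\mathfrak{h}(f)(g)\in\mathcal{F}_{S}(M)$; and since on $T_{S}(M)$ the maps $f\mapsto u(\Delta(f)g)$ and $g\mapsto u(\Delta(f)g)$ are both $F$-linear (as $\Delta$, $u$, and multiplication are $F$-linear), the componentwise extension is $F$-linear in each argument. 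This establishes that $\mathfrak{h}$ is a well-defined $F$-linear map $\mathcal{F}_{S}(M)\to\mathrm{End}_{F}(\mathcal{F}_{S}(M))$, which is the part of being a cyclic derivation beyond identity \eqref{eq1}.

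The core computation then expands the degree-$l$ part of the left-hand side. Writing $f_{1}=\sum_{p}f_{1}(p)$, $f_{2}=\sum_{q}f_{2}(q)$, $f=\sum_{r}f(r)$ and using $(f_{1}f_{2})(s)=\sum_{p+q=s}f_{1}(p)f_{2}(q)$ together with $F$-linearity of $\mathfrak{h}$ in its first argument, one obtains
\[
\mathfrak{h}(f_{1}f_{2})(f)(l)=\sum_{p+q+r=l}\mathfrak{h}\bigl(f_{1}(p)f_{2}(q)\bigr)\bigl(f(r)\bigr).
\]
Now each summand is homogeneous, so the tensor-algebra identity proved above applies termwise, giving $\mathfrak{h}(f_{1}(p)f_{2}(q))(f(r))=\mathfrak{h}(f_{1}(p))(f_{2}(q)f(r))+\mathfrak{h}(f_{2}(q))(f(r)f_{1}(p))$. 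On the other side, unravelling $\mathfrak{h}(f_{1})(f_{2}f)(l)$ via $(f_{2}f)(s)=\sum_{q+r=s}f_{2}(q)f(r)$ yields $\sum_{p+q+r=l}\mathfrak{h}(f_{1}(p))(f_{2}(q)f(r))$, and symmetrically $\mathfrak{h}(f_{2})(ff_{1})(l)=\sum_{p+q+r=l}\mathfrak{h}(f_{2}(q))(f(r)f_{1}(p))$. Summing these matches the previous display term for term, so the two sides agree in degree $l$ for every $l$, whence they agree in $\mathcal{F}_{S}(M)$.

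I expect the only real care to lie in the index bookkeeping of the nested sums and in confirming the grading claim $\mathfrak{h}(M^{\otimes i})(M^{\otimes j})\subseteq M^{\otimes i+j}$, which is what lets me collect all contributions of total degree $l$ and regroup them freely. There are no convergence subtleties: in each fixed degree $l$ only finitely many triples $(p,q,r)$ with $p+q+r=l$ occur, so every rearrangement used is a finite rearrangement within a single graded piece, and the reduction to the already-verified homogeneous identity is exact.
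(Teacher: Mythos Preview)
Your proof is correct and follows essentially the same approach as the paper: compare degree-$l$ components, expand $(f_{1}f_{2})(i)$ into a sum over $i_{1}+i_{2}=i$, apply the tensor-algebra identity to each homogeneous summand, and regroup using $(f_{2}f)(t)=\sum_{q+r=t}f_{2}(q)f(r)$ and $(ff_{1})(r)=\sum_{j+p=r}f(j)f_{1}(p)$. Your additional remarks on well-definedness and the grading claim $\mathfrak{h}(M^{\otimes i})(M^{\otimes j})\subseteq M^{\otimes i+j}$ simply make explicit what the paper records just before the proposition.
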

\begin{proof} Let $f,f_{1},f_{2} \in \mathcal{F}_{S}(M)$. Then for any non-negative integer $l$, we have:
\begin{align*}
\mathfrak{h}(f_{1}f_{2})(f)(l)&=\sum _{i+j=l}\mathfrak{h}((f_{1}f_{2})(i))(f(j)) \\
&=\sum _{i_{1}+i_{2}+j=l}\mathfrak{h}(f_{1}(i_{1})f_{2}(i_{2}))(f(j)) \\
&=\sum _{i_{1}+i_{2}+j=l}\mathfrak{h}(f_{1}(i_{1}))(f_{2}(i_{2})f(j))+\sum _{i_{1}+i_{2}+j=l}\mathfrak{h}(f_{2}(i_{2}))(f(j)f_{1}(i_{1})) \\
&=\sum _{i_{1}+t=l}\mathfrak{h}(f_{1}(i_{1}))((f_{2}f)(t))+\sum _{i_{2}+r=l}\mathfrak{h}(f_{2}(i_{2}))((ff_{1})(r)) \\
&=\mathfrak{h}(f_{1})(f_{2}f)(l)+\mathfrak{h}(f_{2})(ff_{1})(l) \\
&=\left(\mathfrak{h}(f_{1})(f_{2}f)+\mathfrak{h}(f_{2})(ff_{1})\right)(l).
\end{align*}
The result follows.
\end{proof}
From the above we obtain a cyclic derivative $\delta$ on $\mathcal{F}_{S}(M)$ defined as
\begin{center}
$\delta (f):=\mathfrak{h}(f)(1)$
\end{center}
for every $f\in \mathcal{F}_{S}(M)$.

\begin{definition} Let $\mathcal{C}$ be a subset of $M$. We say that $\mathcal{C}$ is a right $S$-local basis of $M$ if every element of $\mathcal{C}$ is legible and if for each pair of idempotents $e_{i},e_{j}$ of $S$, we have that $\mathcal{C} \cap e_{i}Me_{j}$ is a $D_{j}$-basis for $e_{i}Me_{j}$.
\end{definition}

Note that a right $S$-local basis $\mathcal{C}$ induces a dual basis $\{u,u^{\ast}\}_{u \in \mathcal{C}}$, where $u^{\ast}: M_{S} \rightarrow S_{S}$ is the morphism of right $S$-modules defined by $u^{\ast}(v)=0$ if $v \in \mathcal{C} \setminus \{u\}$; and $u^{\ast}(u)=e_{j}$ if $u=e_{i}ue_{j}$. \\

Let $T$ be a $Z$-local basis of $M_{0}$ and $L$ be a $Z$-local basis of $S$. The former means that for each pair of idempotents $e_{i},e_{j}$ of $S$, $T \cap e_{i}Me_{j}$ is an $F$-basis of $e_{i}M_{0}e_{j}$, and the latter means that $L(i)=L \cap e_{i}S$ is an $F$-basis of the division algebra $e_{i}S=D_{i}$. It follows that the non-zero elements of the set $\{sa: s \in L, a \in T\}$ is a right $S$-local basis of $M$. Therefore, for every $s \in L$ and $aÊ\in T$, we have the map $(sa)^{\ast} \in \operatorname{Hom}_{S}(M_{S},S_{S})$ induced by the dual basis.

\begin{definition} Let $\mathcal{D}$ be a subset of $M$. We say that $\mathcal{D}$ is a left $S$-local basis of $M$ if every element of $\mathcal{D}$ is legible and if for each pair of idempotents $e_{i},e_{j}$ of $S$, we have that $\mathcal{D} \cap e_{i}Me_{j}$ is a $D_{i}$-basis for $e_{i}Me_{j}$. 
\end{definition}
As before, a left $S$-local basis $\mathcal{D}$ induces a dual basis $\{u,^{\ast}u\}_{u \in \mathcal{D}}$ where $^{\ast}u:  _{S}M \rightarrow _{S}S$ is the morphism of left $S$-modules defined by $^{\ast}u(v)=0$ if $v \in \mathcal{D} \setminus \{u\}$; and $^{\ast}u(u)=e_{i}$ if $u=e_{i}ue_{j}$. 

Let $\psi$ be any element of $\mathrm{Hom}_{S}(M_{S},S_{S})$. We will extend $\psi$ to an $F$-linear endomorphism of $\mathcal{F}_{S}(M)$, which we will denote by $\psi_{\ast}$.

First, we define $\psi_{\ast}(s)=0$ for $s\in S$; and for $M^{\otimes l}$, where $l \geq 1$, we define $\psi_{\ast}(m_{1}\otimes \dotsm \otimes m_{l})=\psi (m_{1})m_{2} \otimes \cdots \otimes m_{l}\in M^{\otimes (l-1)}$ for $m_{1},\dots,m_{l}\in M$. Finally, for $f\in \mathcal{F}_{S}(M)$ we define $\psi_{\ast} (f)\in \mathcal{F}_{S}(M)$ by setting
$\psi_{\ast} (f)(l-1)=\psi_{\ast}(f(l))$ for each integer $l>1$. Then we set
\begin{center}
$\psi_{\ast} (f)=\displaystyle \sum _{l=0}^{\infty }\psi_{\ast} (f(l)).$
\end{center}

\begin{definition} Let $\psi \in M^{*}=\mathrm{Hom}_{S}(M_{S},S_{S})$ and $f\in \mathcal{F}_{S}(M)$. We define $\delta _{\psi }:\mathcal{F}_{S}(M)\rightarrow \mathcal{F}_{S}(M)$ as
\begin{center}
$\delta _{\psi }(f)=\psi_{\ast}(\delta (f))=\displaystyle \sum _{l=0}^{\infty }\psi_{\ast}(\delta (f(l))).$
\end{center}
\end{definition}

\begin{definition} Given an $S$-bimodule $N$ we define the \emph{cyclic part} of $N$ as $N_{cyc}:=\displaystyle \sum_{j=1}^{n} e_{j}Ne_{j}$.
\end{definition}
\begin{definition} A \emph{potential}  $P$ is an element of $\mathcal{F}_{S}(M)_{cyc}$. 
\end{definition}

For each legible element $a$ of $e_{i}Me_{j}$, we let $\sigma(a)=i$ and $\tau(a)=j$.

\begin{definition} Let $P$ be a potential in $\mathcal{F}_{S}(M)$, we define a two-sided ideal $R(P)$ as the closure of the two-sided ideal of $\mathcal{F}_{S}(M)$ generated by all the elements $X_{a^{\ast}}(P)=\displaystyle \sum_{s \in L(\sigma(a))} \delta_{(sa)^{\ast}}(P)s$, $a \in T$.
\end{definition}

\begin{definition} An algebra with potential is a pair $(\mathcal{F}_{S}(M),P)$ where $P$ is a potential in $\mathcal{F}_{S}(M)$ and $M_{cyc}=0$.
\end{definition}

The following construction follows the one given in \cite[p.20]{5}. Let $k$ be an integer in $[1,n]$. Using the $S$-bimodule $M$, we define a new $S$-bimodule $\mu_{k}M=\widetilde{M}$ as:
\begin{center}
$\widetilde{M}:=\bar{e}_{k}M\bar{e}_{k} \oplus Me_{k}M \oplus (e_{k}M)^{\ast} \oplus ^{\ast}(Me_{k})$
\end{center}
where $\bar{e}_{k}=1-e_{k}$, $(e_{k}M)^{\ast}=\operatorname{Hom}_{S}((e_{k}M)_{S},S_{S})$, and $^{\ast}(Me_{k})=\operatorname{Hom}_{S}(_{S}(Me_{k}),_{S}S)$. One can show (see \cite[Lemma 8.7]{1}) that $\mu_{k}M$ is $Z$-freely generated. 

\begin{definition} Let $P$ be a potential in $\mathcal{F}_{S}(M)$ such that $e_{k}Pe_{k}=0$. Following \cite{5}, we define:
\begin{center}
$\mu_{k}P:=[P]+\displaystyle \sum_{sa \in _{k}\hat{T},bt \in \tilde{T}_{k}}[btsa]((sa)^{\ast})(^{\ast}(bt))$
\end{center}
\end{definition}
where: 
\begin{align*}
_{k}\hat{T}&=\{sa: s \in L(k),a \in T \cap e_{k}M\} \\
\tilde{T}_{k}&=\{bt: b \in T \cap Me_{k}, t \in L(k)\}.  
\end{align*}

\section{Existence of non-degenerate potentials}

For every integer $m \geq 2$, let $B(T)_{m}$ be the $F$-basis of $(M^{\otimes m})_{cyc}$ consisting of all elements of the form $x=t_{1}(x)a_{1}(x)t_{2}(x) \cdots t_{m}(x)a_{m}(x)t_{m+1}(x)$ where $t_{i}(x) \in L(\sigma(a_{i}(x)))$, $t_{m+1}(x) \in L(\tau(a_{m}(x)))$, $a_{i}(x) \in T$, and let $B(T)=\displaystyle \bigcup_{m=2}^{\infty} B(T)_{m}$. If $B(T)$ is non-empty, then $B(T)$ is clearly countable. Note than an enumeration of the elements of $B(T)$ gives rise to an algebra isomorphism between $F[Z_{x}]_{x \in B(T)}$, the free commutative $F$-algebra on the set $B(T)$, and the polynomial ring in countably many variables. 

\begin{definition} Let $P$ be a potential in $\mathcal{F}_{S}(M)$. We say that $P$ is $2$-acyclic if no element of $(M^{\otimes 2})_{cyc}$ appears in the expansion of $P$.
\end{definition}

The following definition is motivated by \cite[Definition 7.2]{5}.

\begin{definition} \label{def10} Let $k_{1}, \hdots, k_{l}$ be a finite sequence of elements of $\{1,\hdots,n\}$ such that $k_{p} \neq k_{p+1}$ for $p=1,\hdots, l-1$. We say that an algebra with potential $(\mathcal{F}_{S}(M),P)$ is $(k_{l},\hdots,k_{1})$-non-degenerate if all the iterated mutations $\bar{\mu}_{k_{1}}P$, $\bar{\mu}_{k_{2}}\bar{\mu}_{k_{1}}P, \hdots, \bar{\mu}_{k_{l}} \cdots \bar{\mu}_{k_{1}}P$ are $2$-acyclic. We say that $(\mathcal{F}_{S}(M),P)$ is non-degenerate if it is $(k_{l},\hdots, k_{1})$-non-degenerate for every sequence of integers as above.
\end{definition}

\begin{definition} If $W \in F[Z_{x}]_{x \in B(T)}$ we define $Z(W):=\{f \in F^{B(T)}: W(f(x))_{x \in B(T)} \neq 0\}$ where $F^{B(T)}$ is the $F$-vector space of all functions $B(T) \rightarrow F$.
\end{definition}

We now recall the definition of \emph{species realization} of a skew-symmetrizable integer matrix, in the sense of \cite{6} (Definition 2.22).

\begin{definition} \label{especies} Let $B=(b_{ij}) \in \mathbb{Z}^{n \times n}$ be a skew-symmetrizable matrix, and let $I=\{1,\hdots, n\}$. A species realization of $B$ is a pair $(\mathbf{S},\mathbf{M})$ such that:
\begin{enumerate}
\item $\mathbf{S}=(F_{i})_{i \in I}$ is a tuple of division rings;
\item $\mathbf{M}$ is a tuple consisting of an $F_{i}-F_{j}$ bimodule $M_{ij}$ for each pair $(i,j) \in I^{2}$ such that $b_{ij}>0$;
\item for every pair $(i,j) \in I^{2}$ such that $b_{ij}>0$, there are $F_{j}-F_{i}$-bimodule isomorphisms
\begin{center}
$\operatorname{Hom}_{F_{i}}(M_{ij},F_{i}) \cong \operatorname{Hom}_{F_{j}}(M_{ij},F_{j})$;
\end{center}
\item for every pair $(i,j) \in I^{2}$ such that $b_{ij}>0$ we have $\operatorname{dim}_{F_{i}}(M_{ij})=b_{ij}$ and $\operatorname{dim}_{F_{j}}(M_{ij})=-b_{ji}$.
\end{enumerate}
\end{definition}

\begin{theorem} \label{theo1} Suppose that the underlying field $F$ is uncountable, then $\mathcal{F}_{S}(M)$ admits a non-degenerate potential.
\end{theorem}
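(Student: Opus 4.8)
The plan is to transplant the Derksen--Weyman--Zelevinsky existence argument \cite[Corollary 7.4]{5} into the species setting built above. First I would use the $F$-basis $B(T)=\bigcup_{m\ge 2}B(T)_{m}$ of $\mathcal{F}_{S}(M)_{cyc}$ to parametrize potentials: every potential is uniquely $P_{f}=\sum_{x\in B(T)}f(x)\,x$ for a coefficient function $f\in F^{B(T)}$. Under this identification the space of potentials becomes the affine space $F^{B(T)}$, the free commutative algebra $F[Z_{x}]_{x\in B(T)}$ is its ring of polynomial functions, and $Z(W)$ is exactly the basic open locus where $W$ does not vanish. Since $B(T)$ is countable, $F^{B(T)}$ involves only countably many coordinates.

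Next, for a fixed sequence $\mathbf{k}=(k_{1},\dots,k_{l})$ with $k_{p}\neq k_{p+1}$, I would show that the set of $f$ for which $\bar{\mu}_{k_{l}}\cdots\bar{\mu}_{k_{1}}P_{f}$ is $2$-acyclic is precisely a basic open set $Z(W_{\mathbf{k}})$ for a polynomial $W_{\mathbf{k}}\in F[Z_{x}]_{x\in B(T)}$ depending on only finitely many of the variables $Z_{x}$. The two key points are: (i) each mutation $\mu_{k_{p}}$ acts on coefficients polynomially and degree by degree, and the passage to the reduced potential $\bar{\mu}_{k_{p}}$ (splitting off the trivial part) can be arranged so that every coefficient of the output is a polynomial in finitely many coefficients of the input; and (ii) at each step, $2$-acyclicity after reduction is equivalent to the relevant degree-$2$ cyclic form having full rank, i.e. to the non-vanishing of a determinant in those coefficients. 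Taking $W_{\mathbf{k}}$ to be the product of these determinants along $\mathbf{k}$ then cuts out the $2$-acyclic locus and depends on $P_{f}$ only up to a bounded degree, hence on finitely many $f(x)$. Non-triviality of this condition is where I would invoke \cite{1}: for each such $\mathbf{k}$ there is a potential on $\mathcal{F}_{S}(M)$ whose iterated mutations along $\mathbf{k}$ are all $2$-acyclic, so $Z(W_{\mathbf{k}})\neq\emptyset$ and $W_{\mathbf{k}}$ is a nonzero polynomial. As there are only countably many admissible sequences $\mathbf{k}$, this yields a countable family of nonzero polynomials $W_{1},W_{2},\dots$, each in finitely many variables, and it suffices to find one $f\in\bigcap_{i}Z(W_{i})$; the corresponding $P_{f}$ is then non-degenerate in the sense of Definition \ref{def10}.

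The final step is the genericity argument, which is where uncountability of $F$ is used. Enumerating the (countably many) variables that occur, I would assign the values $f(x)$ one variable at a time while maintaining the invariant that after each assignment every $W_{i}$ remains a nonzero polynomial in the not-yet-assigned variables. When choosing the value of the current variable $Z$, for each $i$ the current polynomial $g_{i}\neq 0$ can be written as $\sum_{d}p_{i,d}Z^{d}$ with $p_{i,d}$ polynomials in the remaining variables; since $g_{i}\neq 0$, specializing $Z=c$ destroys it only for the finitely many $c$ that are roots of a suitable nonzero univariate coefficient extracted from $g_{i}$. Ranging over all $i$, only countably many values of $c$ are excluded, and since $F$ is uncountable a legitimate choice exists. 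Because each $W_{i}$ involves finitely many variables, after finitely many stages its value is determined and, by the invariant, nonzero; hence $W_{i}(f)\neq 0$ for all $i$.

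The main obstacle I expect is point (i) above: verifying that forming the reduced (bar) mutation is a polynomial operation with finite dependence on coefficients, so that $2$-acyclicity after an arbitrary sequence is genuinely cut out by a single polynomial in finitely many variables. This is the place where the power-series nature of the splitting into trivial and reduced parts must be controlled degree by degree; the remaining ingredients are either bookkeeping (countability of sequences), an appeal to \cite{1} (non-emptiness of each $Z(W_{\mathbf{k}})$), or the standard uncountable-field genericity argument just described.
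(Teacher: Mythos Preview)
Your proposal is correct and follows essentially the same route as the paper's proof: parametrize potentials by $F^{B(T)}$, produce for each admissible sequence $\mathbf{k}$ a nonzero polynomial $W_{\mathbf{k}}$ (in finitely many of the variables) whose non-vanishing locus consists of $(k_l,\dots,k_1)$-non-degenerate potentials, observe there are only countably many such polynomials, and then use uncountability of $F$ to choose coordinates one at a time so as to land in $\bigcap_{\mathbf{k}}Z(W_{\mathbf{k}})$. The paper invokes \cite[Proposition~12.5]{1} for the non-emptiness of each $Z(W_{\mathbf{k}})$ and \cite[Proposition~12.3]{1} for the existence of $W_{\mathbf{k}}$, exactly as you anticipate.

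The one place where you go beyond the paper is your point~(i): you propose to re-derive that the reduced mutation $\bar{\mu}_{k}$ depends polynomially, and with finite support, on the input coefficients. The paper does not do this; it simply cites \cite[Proposition~12.3]{1} for the finished statement. So the ``main obstacle'' you flag is precisely the content of that proposition, and you may invoke it directly rather than reproving it. A minor point: you assert that the $(k_l,\dots,k_1)$-non-degenerate locus is \emph{exactly} $Z(W_{\mathbf{k}})$, whereas all that is needed (and all that \cite[Proposition~12.3]{1} provides) is the containment $Z(W_{\mathbf{k}})\subseteq\{\text{non-degenerate for }\mathbf{k}\}$; this does not affect the argument.
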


\begin{proof} We follow the guidelines of \cite[Corollary 7.4]{5}. Let $k_{1}, \hdots, k_{l}$ be a sequence as in Definition \ref{def10}. By \cite[Proposition 12.5]{1} there exists a potential $P \in \mathcal{F}_{S}(M)$ such that $P$ is $(k_{l},\hdots, k_{1})$-non-degenerate. Now using \cite[Proposition 12.3]{1} we can find a nonzero polynomial $W_{(k_{1},\hdots, k_{l})} \in F[Z_{x}]_{x \in B(T)}$ such that every potential belonging to $Z(W_{(k_{1},\hdots, k_{l})})$, is $(k_{l},\hdots, k_{1})$-non-degenerate. This collection $\mathcal{A}$ is a subset of $F[Z_{x}]_{x \in B(T)}$. Moreover, it is a countable family since it is indexed by a subset of all finite sequences of $\mathbb{N}$, and the latter is clearly countable. It remains to show that $\displaystyle \bigcap_{A \in \mathcal{A}} Z(A) \neq \emptyset$. We may realize the polynomial ring $F[Z_{x}]_{x \in B(T)}$ as the polynomial ring $F[t_{1},t_{2},\hdots]$. As in \cite[Corollary 7.4]{5}, since $F$ is uncountable we can find $\lambda_{1} \in F$ such that $G(\lambda_{1}) \neq 0$ for all $G \in \mathcal{A} \cap F[t_{1}]$. Then, we can find $\lambda_{2} \in F$ such that $G(\lambda_{1},\lambda_{2}) \neq 0$ for all $G \in \mathcal{A} \cap F[t_{1},t_{2}]$. Repeating this process, we can find a sequence $(\lambda_{i})_{i \geq 1}$ of elements of $F$ such that $G(\lambda_{1},\lambda_{2},\hdots) \neq 0$ for all $G \in \mathcal{A}$. Note that we can guarantee this by the fact that $\mathcal{A}$ can be written as the union of the factors that appear in the following ascending chain: 
\begin{center}
$\mathcal{A} \cap F[t_{1}] \subseteq \mathcal{A} \cap F[t_{1},t_{2}]  \subseteq \mathcal{A} \cap F[t_{1},t_{2},t_{3},\hdots] \subseteq \ldots$ 
\end{center}
This completes the proof.
\end{proof}

\begin{corollary} \label{cor} Let $B=(b_{ij}) \in \mathbb{Z}^{n \times n}$ be a skew-symmetrizable matrix with skew-symmetrizer $D=\operatorname{diag}(d_{1},\ldots,d_{n})$. If $d_{j}$ divides $b_{i,j}$ for every $j$ and every $i$, then the matrix $B$ can be realized by a species admitting a non-degenerate potential. 
\end{corollary}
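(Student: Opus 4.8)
The plan is to combine the species realization guaranteed by the divisibility hypothesis with Theorem \ref{theo1}, the only genuinely new ingredient being the choice of an uncountable base field. The divisibility condition $d_{j}\mid b_{ij}$ is precisely the hypothesis under which the construction of \cite{1} produces a species realization $(\mathbf{S},\mathbf{M})$ of $B$ (this is the realization asserted in the result of \cite{1} recalled in the introduction). I would therefore first recall that realization and then verify that it may be carried out over an uncountable field $F$, after which Theorem \ref{theo1} finishes the argument essentially for free.

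First I would fix an uncountable field $F$ admitting, for each $i$, a division ring $D_{i}\supseteq F$ with $F$ in its center and $\dim_{F} D_{i}=d_{i}$; for instance one may take $F=\mathbb{C}(t)$, which is uncountable and admits a field extension of every finite degree $d$ (e.g.\ $\mathbb{C}(t^{1/d})$), so that the $D_{i}$ may be taken to be suitable finite field extensions of $F$. With these division rings in hand, the construction of \cite{1} assembles the $F_{i}$--$F_{j}$-bimodules $M_{ij}$ (for $b_{ij}>0$) of the prescribed dimensions $\dim_{F_{i}} M_{ij}=b_{ij}$ and $\dim_{F_{j}} M_{ij}=-b_{ji}$; the divisibility $d_{j}\mid b_{ij}$, which by skew-symmetrizability also forces $d_{i}\mid b_{ji}$, is exactly what makes these dimension counts compatible and guarantees conditions (1)--(4) of Definition \ref{especies}.

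Setting $S=\prod_{i} D_{i}$ and $M=\bigoplus_{b_{ij}>0} M_{ij}$, this $M$ is the $Z$-freely generated $S$-bimodule to which the machinery of the paper applies. Since $B$ is skew-symmetrizable its diagonal entries vanish, so there are no bimodules $M_{ii}$ and hence $e_{i}Me_{i}=0$ for every $i$; thus $M_{cyc}=0$ and $(\mathcal{F}_{S}(M),P)$ is a genuine algebra with potential for any choice of $P$. Because $F$ was chosen uncountable, Theorem \ref{theo1} applies to $\mathcal{F}_{S}(M)$ and produces a non-degenerate potential $P$. This exhibits $B$ as realized by the species $(\mathbf{S},\mathbf{M})$ together with the non-degenerate potential $P$, which is the desired conclusion.

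The step I expect to be the main obstacle is the first one: confirming that the species realization of \cite{1} can be produced over an uncountable base field. The construction there may have been phrased over a convenient, possibly countable, field, so I would need to check that nothing in it forces countability --- in particular that division rings $D_{i}$ of dimension $d_{i}$ over $F$ still exist when $F$ is uncountable, and that both the bimodule construction and its $Z$-free generation are insensitive to the choice of $F$. Once the realization is available over an uncountable $F$, the reduction to Theorem \ref{theo1} is immediate, and the remaining verifications ($M_{cyc}=0$, $Z$-free generation) are routine.
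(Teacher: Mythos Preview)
Your overall strategy matches the paper's exactly: realize $B$ by a species over an uncountable base field using the construction of \cite{1}, then apply Theorem~\ref{theo1}. The one place where the paper is more careful is the choice of field. The species realization invoked from \cite{1} (specifically \cite[Proposition~11.2]{1}) does not take as input an arbitrary tuple of division rings $D_i$ with $\dim_F D_i=d_i$; it requires the $D_i$ to sit as intermediate fields of a single Galois extension $E/F$ with $\operatorname{Gal}(E/F)\cong\bigoplus_{i}\mathbb{Z}/d_i\mathbb{Z}$, since the bimodules $M_{ij}$ and their $Z$-free generation are built from that Galois data. Your suggested extensions $\mathbb{C}(t^{1/d_i})/\mathbb{C}(t)$ do not assemble into such an $E$ when the $d_i$ share common factors: their compositum has cyclic Galois group of order $\operatorname{lcm}(d_1,\ldots,d_n)$ rather than the direct sum $\bigoplus_i\mathbb{Z}/d_i\mathbb{Z}$. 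The paper produces the required extension by a standard trick: with $m=d_1\cdots d_n$, take $U=\mathbb{C}(x_1,\ldots,x_m)$ over the symmetric-function subfield $V=\mathbb{C}(s_1,\ldots,s_m)$, so that $\operatorname{Gal}(U/V)\cong S_m$; embed $G=\bigoplus_i\mathbb{Z}/d_i\mathbb{Z}$ into $S_m$ via Cayley's theorem and let $L=U^{G}$. Then $\operatorname{Gal}(U/L)\cong G$, and $L$ is uncountable because it contains $V\supseteq\mathbb{C}$. With this adjustment to the first step, the rest of your argument is precisely the paper's proof.
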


\begin{proof} Note that there exists a Galois extension $U/V$ such that $\operatorname{Gal}(U/V)$ is isomorphic to $S_{m}$, where $m:=d_{1} \cdots d_{n}$. Indeed, let $F=\mathbb{C}$ denote the set of all complex numbers (or any uncountable field), $U=F(x_{1},\ldots,x_{m})$, the field of rational functions in the indeterminates $x_{1},\ldots,x_{m}$; and $V=F(s_{1},\ldots,s_{m})$, the subfield generated by the elementary symmetric polynomials $s_{1},\ldots,s_{m}$. Then $\operatorname{Gal}(U/V) \cong S_{m}$ and by Cayley's Theorem, $G:=\displaystyle \bigoplus_{i=1}^{n} \mathbb{Z}_{d_{i}}$ may be realized a subgroup of $S_{m}$. Now, let $L$ be the fixed field of $G$ in $U$, then by the Fundamental Theorem of Galois Theory we have that $U/L$ is Galois and $\operatorname{Gal}(U/L) \cong G$. Applying \cite[Proposition 11.2]{1} yields that the matrix $B$ can be realized by a species whose underlying field is precisely $L$. Note that $L$ is uncountable since it contains the uncountable field $V$. Applying Theorem \ref{theo1} yields that such species admits a non-degenerate potential. This completes the proof.
\end{proof}
\section{Rigidity and nondegeneracy}

The following definition is taken from \cite[Definition 43]{1} and it is motivated by \cite[Definition 6.7]{5}.
\begin{definition} Let $(\mathcal{F}_{S}(M),P)$ be an algebra with potential. The deformation space $\operatorname{Def}(M,P)$ is the quotient $\mathcal{F}_{S}(M)^{\geq 1}/(R(P)+[\mathcal{F}_{S}(M),\mathcal{F}_{S}(M)])$ where $[\mathcal{F}_{S}(M),\mathcal{F}_{S}(M)]$ denotes the commutator of $\mathcal{F}_{S}(M)$.
\end{definition}

The following definition is also based on \cite[Definition 6.10]{5}.
\begin{definition} An algebra with potential $(\mathcal{F}_{S}(M),P)$ is rigid if the deformation space $\operatorname{Def}(M,P)$ is zero.
\end{definition}

\begin{lemma} \label{rigidez} Let $(\mathcal{F}_{S}(M),P)$ and $(\mathcal{F}_{S}(M'),Q)$ be right-equivalent algebras with potentials. Then $(\mathcal{F}_{S}(M),P)$ is rigid if and only if $(\mathcal{F}_{S}(M'),Q)$ is rigid.
\end{lemma}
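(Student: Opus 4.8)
The plan is to show that a right-equivalence carries the one deformation space isomorphically onto the other; since a vector space is zero exactly when any space isomorphic to it is zero, this yields the lemma at once. Recall that, by the definition of right-equivalence recalled in \cite{1}, there is an $F$-algebra isomorphism $\varphi \colon \mathcal{F}_{S}(M) \to \mathcal{F}_{S}(M')$ whose restriction to $S$ is the identity and such that $\varphi(P)$ is cyclically equivalent to $Q$. Because $\varphi$ fixes $S$ and induces the identity on $\mathcal{F}_{S}(M)/\mathcal{F}_{S}(M)^{\geq 1} \cong S$, it automatically preserves the degree filtration, so $\varphi\bigl(\mathcal{F}_{S}(M)^{\geq 1}\bigr)=\mathcal{F}_{S}(M')^{\geq 1}$. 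Since $\varphi$ is an isomorphism of $F$-algebras respecting closures, it also carries $[\mathcal{F}_{S}(M),\mathcal{F}_{S}(M)]$ onto $[\mathcal{F}_{S}(M'),\mathcal{F}_{S}(M')]$. Thus the whole matter reduces to comparing the Jacobian ideals.

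The first substantive step is to observe that $R(\,\cdot\,)$ depends only on the cyclic-equivalence class of a potential. Indeed, the cyclic derivative satisfies $\delta(f_{1}f_{2})=\delta(f_{2}f_{1})$, the identity recorded just after \eqref{eq1}, so $\delta$ annihilates commutators; applying $\psi_{\ast}$ and taking closures shows each $\delta_{\psi}$, and hence each generator $X_{a^{\ast}}$, vanishes on the closure of the commutator. Since $\varphi(P)$ and $Q$ differ by such an element, $X_{a^{\ast}}(\varphi(P))=X_{a^{\ast}}(Q)$ for every $a \in T$, and therefore $R(\varphi(P))=R(Q)$. It remains only to prove the covariance $\varphi\bigl(R(P)\bigr)=R(\varphi(P))$ modulo $[\mathcal{F}_{S}(M'),\mathcal{F}_{S}(M')]$.

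This covariance -- a chain rule for the cyclic derivative under the automorphism $\varphi$ -- is the step I expect to be the main obstacle. The idea is to write $\varphi$ on the generators as $\varphi=\varphi_{1}+\varphi_{\geq 2}$, with $\varphi_{1}$ the degree-preserving (hence invertible) component, and to expand $\delta_{\psi}(\varphi(P))$ by applying the cyclic Leibniz rule \eqref{eq1} factor by factor to the expansion of $\varphi(P)$ in the generators. Cyclic symmetry allows each factor to be rotated to the front, so the outcome is a two-sided $\mathcal{F}_{S}(M')$-linear combination of the elements $\varphi\bigl(X_{b^{\ast}}(P)\bigr)$ whose coefficient matrix is built from $\varphi$ and is invertible because $\varphi_{1}$ is; consequently the two closed two-sided ideals coincide once we work modulo commutators. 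The delicate point is the bookkeeping of how $\delta_{\psi}$ interacts with an arbitrary filtration-preserving automorphism when the coefficients live in the noncommutative division rings $D_{i}$, paralleling the change-of-variables computation of Derksen--Weyman--Zelevinsky \cite[Sections 3 and 6]{5}. Here it is essential that the commutator $[\mathcal{F}_{S}(M'),\mathcal{F}_{S}(M')]$ already appears in the denominator of the deformation space, since it is exactly what absorbs the error terms produced by the chain rule.

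Combining the three steps, $\varphi$ sends $R(P)+[\mathcal{F}_{S}(M),\mathcal{F}_{S}(M)]$ onto $R(Q)+[\mathcal{F}_{S}(M'),\mathcal{F}_{S}(M')]$ while carrying $\mathcal{F}_{S}(M)^{\geq 1}$ onto $\mathcal{F}_{S}(M')^{\geq 1}$. Hence $\varphi$ descends to an $F$-linear isomorphism $\operatorname{Def}(M,P) \xrightarrow{\sim} \operatorname{Def}(M',Q)$. In particular one of these spaces vanishes if and only if the other does, which is exactly the statement that $(\mathcal{F}_{S}(M),P)$ is rigid if and only if $(\mathcal{F}_{S}(M'),Q)$ is rigid.
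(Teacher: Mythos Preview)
Your overall strategy---show that a right-equivalence $\varphi$ induces an isomorphism $\operatorname{Def}(M,P)\cong\operatorname{Def}(M',Q)$---is exactly the paper's. The difference lies in how the key step, the covariance of the Jacobian ideal, is handled. The paper does not prove it here at all: it simply invokes \cite[Theorem~5.3]{1}, which gives the exact equality $\varphi(R(P))=R(\varphi(P))$ (not merely modulo commutators). With that citation in hand, the proof is three lines: $\varphi$ carries $R(P)$ onto $R(\varphi(P))=R(Q)$, carries the closed commutator onto the closed commutator, and carries $\mathcal{F}_{S}(M)^{\geq 1}$ onto $\mathcal{F}_{S}(M')^{\geq 1}$, so it descends to an isomorphism of deformation spaces.

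By contrast, you attempt to re-derive this covariance via a cyclic chain-rule computation, and you explicitly flag it as the main obstacle while giving only an outline. The outline is correct in spirit---it is indeed the argument behind \cite[Theorem~5.3]{1} and its antecedent in \cite[\S3]{5}---but as written it is not a proof: the ``delicate bookkeeping'' you allude to is precisely the content of that theorem, and you have not carried it out. You should either cite \cite[Theorem~5.3]{1} directly, as the paper does, or actually execute the computation. Note also that once you have the exact equality $\varphi(R(P))=R(\varphi(P))$, there is no need to work modulo commutators at that stage, so your formulation introduces an unnecessary complication.
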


\begin{proof} Let $\varphi: \mathcal{F}_{S}(M) \rightarrow \mathcal{F}_{S}(M')$ be an algebra with $\varphi|_{S}=id_{S}$. By \cite[Theorem 5.3]{1} we have $\varphi(R(P))=R(\varphi(P))$, and by continuity of $\varphi$, it follows that $\varphi([\mathcal{F}_{S}(M),\mathcal{F}_{S}(M)])Ê\subseteq [\mathcal{F}_{S}(M'),\mathcal{F}_{S}(M')]$. This implies that $\varphi$ induces a surjection: $\psi: \mathcal{F}_{S}(M)^{\geq 1}/(R(P)+[\mathcal{F}_{S}(M),\mathcal{F}_{S}(M)]) \twoheadrightarrow \mathcal{F}_{S}(M')^{\geq 1}/(R(Q)+[\mathcal{F}_{S}(M'),\mathcal{F}_{S}(M')])$, which in fact is an isomorphism because $\varphi$ is. \end{proof}

\begin{lemma} \label{rigidez2} An algebra with potential $(\mathcal{F}_{S}(M),P)$ is rigid if and only if the mutated algebra $(\mathcal{F}_{S}(\bar{\mu}_{k}M),\bar{\mu}_{k}P)$ is rigid.
\end{lemma}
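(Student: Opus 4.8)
The plan is to reduce this statement to the invariance of rigidity under right-equivalence, which is precisely Lemma \ref{rigidez}, together with the general theory of mutation of algebras with potential developed in \cite{1}. The key structural fact I would invoke is the mutation involution: applying the premutation $\mu_k$ twice returns the original algebra with potential up to right-equivalence. More precisely, one has $\mu_k^2(\mathcal{F}_S(M),P)$ right-equivalent to $(\mathcal{F}_S(M),P)$, and the reduced mutation $\bar{\mu}_k$ is defined as the reduced part of $\mu_k$. This is the species analogue of \cite[Theorem 5.7]{5}, and I would expect it to be available in \cite{1} as the corresponding involutivity statement for species with potential.

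First I would fix notation: write $(\mathcal{F}_S(M'),P') = \bar{\mu}_k(\mathcal{F}_S(M),P)$ for the mutated algebra with potential, where $M' = \bar{\mu}_k M$ and $P' = \bar{\mu}_k P$. The goal is to show $(\mathcal{F}_S(M),P)$ is rigid if and only if $(\mathcal{F}_S(M'),P')$ is rigid. The strategy is to prove a single implication and then invoke symmetry. So I would first assume $(\mathcal{F}_S(M),P)$ is rigid and deduce that its mutation $(\mathcal{F}_S(M'),P')$ is rigid. For the converse, I would apply the same argument to $(\mathcal{F}_S(M'),P')$: mutating it at $k$ yields, by the involution property, an algebra with potential right-equivalent to the original $(\mathcal{F}_S(M),P)$, and rigidity transfers across right-equivalences by Lemma \ref{rigidez}.

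The core of the forward implication is to relate the deformation space of $P'$ to that of $P$. The natural approach is to show that the premutation $\mu_k$ does not change the deformation space, i.e. $\operatorname{Def}(\mu_k M, \mu_k P) \cong \operatorname{Def}(M,P)$, and then argue that passing from $\mu_k$ to the reduced mutation $\bar{\mu}_k$ also preserves vanishing of the deformation space. For the latter I would use that $(\mathcal{F}_S(\mu_k M), \mu_k P)$ is right-equivalent to the direct sum of its reduced part $(\mathcal{F}_S(\bar{\mu}_k M), \bar{\mu}_k P)$ and a trivial algebra with potential, and that the deformation space of a trivial part vanishes while the deformation space is additive over such direct-sum decompositions. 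Combining these, $\operatorname{Def}(\mu_k M, \mu_k P) = 0$ forces $\operatorname{Def}(\bar{\mu}_k M, \bar{\mu}_k P) = 0$, giving rigidity of the mutation.

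The main obstacle is establishing $\operatorname{Def}(\mu_k M, \mu_k P) \cong \operatorname{Def}(M,P)$ at the level of the premutation, because this requires tracking how the cyclic derivatives $X_{a^*}$, and hence the Jacobian ideal $R(P)$, transform under the explicit formula for $\mu_k P$ given in the excerpt. One must verify that the generators of $R(\mu_k P)$ modulo commutators correspond bijectively (up to the trivial summand) to those of $R(P)$, using the compatibility of cyclic derivatives with the mutation recipe as encoded by \cite[Theorem 5.3]{1}. Once this isomorphism is in hand, the reduction to the reduced part and the appeal to Lemma \ref{rigidez} are formal, and the equivalence follows.
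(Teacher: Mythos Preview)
The paper's own proof is a one-line deferral: ``This is the statement of \cite[Proposition~10.1]{1}.'' There is no argument given in the paper itself. Your proposal, by contrast, sketches what the proof of that cited proposition actually looks like, and your outline follows the standard Derksen--Weyman--Zelevinsky strategy: establish that the deformation space is unchanged by the premutation, pass to the reduced part via the splitting theorem, and close the biconditional using involutivity together with Lemma~\ref{rigidez}. That is essentially the route taken in \cite{1} (mirroring \cite[Corollary~6.11 and Theorem~5.7]{5}), so your approach is not genuinely different---it is simply an unpacking of the citation.

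One small correction: you write that ``$\mu_k^2(\mathcal{F}_S(M),P)$ is right-equivalent to $(\mathcal{F}_S(M),P)$'' with $\mu_k$ the premutation. This is not true in general; the premutation applied twice produces extra trivial summands. The involution statement (cf.\ \cite[Theorem~8.12]{1}) holds for the \emph{reduced} mutation $\bar{\mu}_k$, i.e.\ $\bar{\mu}_k^2(\mathcal{F}_S(M),P)$ is right-equivalent to $(\mathcal{F}_S(M),P)$. This does not affect your argument, since the symmetry step only needs $\bar{\mu}_k$ to be an involution up to right-equivalence, which is exactly what you use when you apply the forward implication to $(\mathcal{F}_S(M'),P')$ and then invoke Lemma~\ref{rigidez}.
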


\begin{proof} This is the statement of \cite[Proposition 10.1]{1}.
\end{proof}

\begin{lemma} Let $P$ be a reduced potential in $\mathcal{F}_{S}(M)$ and let $k_{1},k_{2}$ be distinct integers in $[1,n]$ such that $\mathcal{F}_{S}(\bar{\mu}_{k_{2}}\bar{\mu}_{k_{1}}M)_{cyc}=0$. Then $P$ is a non-degenerate potential in $\mathcal{F}_{S}(M)$.
\end{lemma}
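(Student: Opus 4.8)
The plan is to derive non-degeneracy of $P$ from \emph{rigidity} of $(\mathcal{F}_{S}(M),P)$, exploiting the mutation-invariance of rigidity recorded in Lemma \ref{rigidez2}. Write $N:=\bar{\mu}_{k_{2}}\bar{\mu}_{k_{1}}M$. The hypothesis $\mathcal{F}_{S}(N)_{cyc}=0$ should be read as the statement that the species underlying $N$ has no oriented cycles, i.e.\ $(N^{\otimes m})_{cyc}=0$ for every $m\geq 1$. Since $\bar{\mu}_{k_{2}}\bar{\mu}_{k_{1}}P$ is by definition a potential on $N$, it lies in $\mathcal{F}_{S}(N)_{cyc}=0$ and is therefore the zero potential; hence the algebra with potential obtained after the two mutations is exactly $(\mathcal{F}_{S}(N),0)$. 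Here the assumption $k_{1}\neq k_{2}$ guarantees that the two-step sequence is admissible in the sense of Definition \ref{def10}, and the assumption that $P$ is reduced guarantees that $\bar{\mu}_{k_{1}}$ and then $\bar{\mu}_{k_{2}}$ are defined.

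The first substantive step is to verify that $(\mathcal{F}_{S}(N),0)$ is rigid. Because $R(0)=0$, the deformation space collapses to $\operatorname{Def}(N,0)=\mathcal{F}_{S}(N)^{\geq 1}/[\mathcal{F}_{S}(N),\mathcal{F}_{S}(N)]$, and I would show this quotient vanishes by a direct idempotent computation. Decompose a homogeneous element of degree $m\geq 1$ along the idempotents into legible pieces $x\in e_{i}N^{\otimes m}e_{j}$: the diagonal pieces ($i=j$) all vanish since $(N^{\otimes m})_{cyc}=0$, while each off-diagonal piece ($i\neq j$) satisfies $e_{i}x=x=xe_{j}$ and $xe_{i}=0$, so $x=e_{i}x-xe_{i}=[e_{i},x]\in[\mathcal{F}_{S}(N),\mathcal{F}_{S}(N)]$. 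Thus $\mathcal{F}_{S}(N)^{\geq 1}\subseteq[\mathcal{F}_{S}(N),\mathcal{F}_{S}(N)]$ and $\operatorname{Def}(N,0)=0$; no convergence subtleties arise, because acyclicity forces $T_{S}(N)$ to be concentrated in finitely many degrees so that $\mathcal{F}_{S}(N)=T_{S}(N)$.

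Next I would transport rigidity back to $(\mathcal{F}_{S}(M),P)$ by two applications of Lemma \ref{rigidez2}. Applying it to the mutation at $k_{2}$ gives that $(\mathcal{F}_{S}(\bar{\mu}_{k_{1}}M),\bar{\mu}_{k_{1}}P)$ is rigid if and only if $(\mathcal{F}_{S}(N),0)$ is; applying it again at $k_{1}$ gives that $(\mathcal{F}_{S}(M),P)$ is rigid if and only if $(\mathcal{F}_{S}(\bar{\mu}_{k_{1}}M),\bar{\mu}_{k_{1}}P)$ is. Chaining these equivalences with the previous step shows that $(\mathcal{F}_{S}(M),P)$ is rigid.

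Finally I would invoke the principle that a rigid algebra with potential is non-degenerate. Given any admissible sequence $j_{1},\ldots,j_{r}$, repeated use of Lemma \ref{rigidez2} shows that every iterated mutation $\bar{\mu}_{j_{p}}\cdots\bar{\mu}_{j_{1}}P$ is again rigid; since a rigid potential is $2$-acyclic (the species analogue, available from \cite{1}, of the corresponding Derksen--Weyman--Zelevinsky statement \cite{5}), each such iterated mutation is $2$-acyclic, which is precisely the definition of $P$ being non-degenerate. The step I expect to be the main obstacle is exactly this last implication: the computation above only settles the trivially acyclic case, whereas ``rigid $\Rightarrow$ $2$-acyclic'' for a general reduced potential is the genuinely nontrivial input, and one must also check that $2$-acyclicity at each stage suffices to keep every mutation $\bar{\mu}_{j_{p+1}}$ well-defined along the whole sequence.
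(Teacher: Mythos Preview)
Your proposal is correct and follows essentially the same route as the paper's proof: observe that the twice-mutated potential is zero, use that the resulting algebra with potential is rigid, transport rigidity back to $(\mathcal{F}_{S}(M),P)$ via two applications of Lemma~\ref{rigidez2}, and conclude non-degeneracy from rigidity. The paper is terser---it neither spells out the idempotent computation showing $\operatorname{Def}(N,0)=0$ nor the ``rigid $\Rightarrow$ $2$-acyclic along every mutation sequence'' argument---but the logical skeleton is identical, and the extra detail you supply (especially the commutator trick $x=[e_{i},x]$ for off-diagonal pieces) is a welcome clarification rather than a departure.
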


\begin{proof} Due to the fact that there are no potentials in $\mathcal{F}_{S}(\bar{\mu}_{k_{2}}\bar{\mu}_{k_{1}}M)$ it follows that $\bar{\mu}_{k_{2}}\bar{\mu}_{k_{1}}P=0$. Since mutation at $k_{2}$ preserves rigidity and mutation is an involution (see \cite[Theorem 8.12]{1}), Lemma \ref{rigidez} yields that  $(\mathcal{F}_{S}(\bar{\mu}_{k_{1}}M),\bar{\mu}_{k_{1}}P)$ is also rigid. Applying the same argument with $k_{1}$ one gets that $(\mathcal{F}_{S}(M),P)$ is a rigid and reduced algebra; therefore $P$ is non-degenerate.
\end{proof}
An induction gives the following
\begin{proposition} Let $k_{1},\ldots,k_{l}$ be a finite sequence of elements of $\{1,\ldots,n\}$ such that $k_{i} \neq k_{i+1}$ for every $i=1,\ldots,l-1$. Let $P$ be a reduced potential in $\mathcal{F}_{S}(M)$ such that $\mathcal{F}_{S}(\bar{\mu}_{k_{l}} \cdots \bar{\mu}_{k_{2}} \bar{\mu}_{k_{1}}M)_{cyc}=0$. Then $P$ is a non-degenerate potential in $\mathcal{F}_{S}(M)$.
\end{proposition}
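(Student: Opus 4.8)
The plan is to prove, by induction on the length $l$ of the mutation sequence, that $(\mathcal{F}_{S}(M),P)$ is \emph{rigid}; non-degeneracy then follows verbatim as in the preceding lemma. The engine of the argument is the biconditional of Lemma \ref{rigidez2}: an algebra with potential is rigid if and only if its mutation at a single vertex is rigid. Since the reduced mutation $\bar{\mu}_{k}$ always returns a reduced potential, every intermediate object obtained along the sequence is again a reduced algebra with potential to which Lemma \ref{rigidez2} may be applied, so rigidity can be transported one mutation at a time.

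To set up the bookkeeping I would write $N_{j}=\bar{\mu}_{k_{j}}\cdots\bar{\mu}_{k_{1}}M$ and $Q_{j}=\bar{\mu}_{k_{j}}\cdots\bar{\mu}_{k_{1}}P$ for $0 \leq j \leq l$, with $N_{0}=M$ and $Q_{0}=P$. The base case lives at the top of the chain: the hypothesis $\mathcal{F}_{S}(N_{l})_{cyc}=0$ forces $Q_{l}=0$, because a potential is by definition an element of the cyclic part. Moreover $(\mathcal{F}_{S}(N_{l}),0)$ is rigid, since its deformation space is a quotient of $\mathcal{F}_{S}(N_{l})^{\geq 1}/[\mathcal{F}_{S}(N_{l}),\mathcal{F}_{S}(N_{l})]$, which is in turn a quotient of the cyclic part $\mathcal{F}_{S}(N_{l})_{cyc}$ and hence vanishes by hypothesis.

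I would then descend the chain. For each $j$ from $l$ down to $1$ one has, by construction, $(\mathcal{F}_{S}(N_{j}),Q_{j})=(\mathcal{F}_{S}(\bar{\mu}_{k_{j}}N_{j-1}),\bar{\mu}_{k_{j}}Q_{j-1})$, so Lemma \ref{rigidez2} applied to the pair $(\mathcal{F}_{S}(N_{j-1}),Q_{j-1})$ at the index $k_{j}$ shows that $(\mathcal{F}_{S}(N_{j-1}),Q_{j-1})$ is rigid if and only if $(\mathcal{F}_{S}(N_{j}),Q_{j})$ is. Starting from the rigidity of $(\mathcal{F}_{S}(N_{l}),Q_{l})$ and iterating this equivalence $l$ times peels off the mutations one by one and yields that $(\mathcal{F}_{S}(N_{0}),Q_{0})=(\mathcal{F}_{S}(M),P)$ is rigid. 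This is exactly the induction announced in the statement.

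Finally, since $P$ is reduced and $(\mathcal{F}_{S}(M),P)$ is now known to be rigid, $P$ is non-degenerate by the same reasoning used to close the preceding lemma: a rigid reduced potential has all of its iterated mutations rigid, hence $2$-acyclic, which is precisely Definition \ref{def10}. The step I expect to require the most care is checking that each mutation $\bar{\mu}_{k_{j}}$ is actually defined along the chain, i.e.\ that $e_{k_{j}}Q_{j-1}e_{k_{j}}=0$ at every stage; this is where one invokes that $M_{cyc}=0$—so no cyclic term is based solely at the single vertex $k_{j}$—together with the condition $k_{j}\neq k_{j-1}$ and the invariance of rigidity under the cyclic rotation that moves such terms off $k_{j}$.
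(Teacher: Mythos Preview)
Your proposal is correct and follows exactly the approach the paper intends: the paper's own ``proof'' of this proposition is the single line ``An induction gives the following,'' and your write-up is precisely that induction, using Lemma~\ref{rigidez2} to transport rigidity down from the zero potential at the top of the chain to $(\mathcal{F}_{S}(M),P)$, then concluding non-degeneracy from rigid $+$ reduced as in the preceding lemma. Your final paragraph flags a well-definedness issue that the paper itself glosses over; your suggested resolution via cyclic rotation is the right one, though note that the more delicate point in the background is $2$-acyclicity at $k_{j}$ (needed for the involution property), which here comes for free once rigidity is established at each stage.
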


\section{A species realization for a certain class of $4 \times 4$ skew symmetrizable matrices}
In this section we provide an example of a class of skew-symmetrizable $4 \times 4$ integer matrices, which are not globally unfoldable nor strongly primitive (in the sense of \cite[Definition 14.1]{7}), and that have a species realization admitting a non-degenerate potential. \\

In what follows, let 
\begin{equation}
B
=\begin{bmatrix}
0 & -a & 0 & b \\
1 & 0 & -1 & 0 \\
0 & a & 0 & -b \\
-1 & 0 & 1 & 0
\end{bmatrix}
\label{matrices}
\end{equation}
where $a,b$ are positive integers such that $a<b$, $a$ does not divide $b$ and $(a,b) \neq 1$. \\

Note that there are infinitely many such pairs $(a,b)$. For example, let $p$ and $q$ be primes such that $p<q$. For any $n \geq 2$, define $a=p^{n}$ and $b=p^{n-1}q$. Then $a<b$, $a$ does not divide $b$ and $(a,b)=p^{n-1} \neq 1$. Note that $B$ is skew-symmetrizable since it admits $D=\operatorname{diag}(1,a,1,b)$ as a skew-symmetrizer. \\

\begin{remark} By \cite[Example 14.4]{7} we know that the class of all matrices given by \eqref{matrices} does \emph{not} admit a global unfolding. Moreover, since we are not assuming that $a$ and $b$ are coprime, then such matrices are not strongly primitive; hence they are not covered by \cite{7}. 
\end{remark}

We have the following
\begin{proposition} \label{proposition2} 
The class of all matrices given by \eqref{matrices} are not globally unfoldable nor strongly primitive, yet they can be realized by a species admitting a non-degenerate potential.  
\end{proposition}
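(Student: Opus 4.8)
The plan is to dispatch the two assertions separately, since the first is essentially bookkeeping and the second is a direct application of Corollary \ref{cor}. The claim that the matrices in \eqref{matrices} are neither globally unfoldable nor strongly primitive has already been recorded in the Remark preceding the statement: non-unfoldability is \cite[Example 14.4]{7}, while failure of strong primitivity (in the sense of \cite[Definition 14.1]{7}) follows from the standing hypothesis $(a,b)\neq 1$, as noted there. So I would simply invoke the Remark for this half and devote the actual work to producing the species together with a non-degenerate potential.

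For the second half, the entire point is that $B$ satisfies the hypothesis of Corollary \ref{cor}, after which the conclusion is automatic. First I would fix the candidate skew-symmetrizer $D=\operatorname{diag}(d_{1},d_{2},d_{3},d_{4})=\operatorname{diag}(1,a,1,b)$ and confirm it really is a skew-symmetrizer by checking that $DB$ is skew-symmetric, i.e.\ that $-d_{j}b_{ji}=d_{i}b_{ij}$ at each off-diagonal position; this is the short computation already indicated in the paragraph defining \eqref{matrices}. Then I would verify the divisibility condition $d_{j}\mid b_{ij}$ for every $i$ and every $j$ by scanning the four columns of $B$: columns $1$ and $3$ have $d_{j}=1$, which divides every integer; column $2$ has $d_{2}=a$ and its only nonzero entries are $b_{12}=-a$ and $b_{32}=a$, both divisible by $a$; and column $4$ has $d_{4}=b$ with only nonzero entries $b_{14}=b$ and $b_{34}=-b$, both divisible by $b$. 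I would stress that this column-divisibility is unconditional on the arithmetic relations between $a$ and $b$; the extra hypotheses $a<b$, $a\nmid b$, and $(a,b)\neq 1$ play no role here and serve only to place the family outside the scope of \cite{7}.

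With the hypotheses of Corollary \ref{cor} confirmed, I would apply it verbatim: $B$ is realized by a species whose underlying field is the fixed field $L$ built in that proof, which is uncountable since it contains $V$, and that species admits a non-degenerate potential by Theorem \ref{theo1}. Because all of the substantive content — the existence of $(k_{l},\ldots,k_{1})$-non-degenerate potentials via \cite[Proposition 12.5]{1}, the nonvanishing polynomials of \cite[Proposition 12.3]{1}, and the inductive simultaneous-nonvanishing argument over an uncountable field — is already packaged inside Corollary \ref{cor} and Theorem \ref{theo1}, I expect no genuine obstacle to remain. The one point deserving a moment's care is that the argument hinges on choosing precisely this $D=\operatorname{diag}(1,a,1,b)$: a different skew-symmetrizer of $B$ need not satisfy the column-divisibility, so the verification above must be carried out for this specific $D$ rather than for an arbitrary one.
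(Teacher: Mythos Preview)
Your proposal is correct and follows essentially the same approach as the paper's own proof: invoke \cite[Example 14.4]{7} and the hypothesis $(a,b)\neq 1$ for the first half, then check that $D=\operatorname{diag}(1,a,1,b)$ satisfies $d_{j}\mid b_{ij}$ for all $i,j$ and apply Corollary~\ref{cor}. The only difference is that you spell out the column-by-column divisibility check explicitly, whereas the paper asserts it in one line.
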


\begin{proof} The fact that they are not globally unfoldable follows at once by \cite[Example 14.4]{7}, and by construction, they are not strongly primitive. Let $(d_{1},d_{2},d_{3},d_{4})=(1,a,1,b)$ be the skew-symmetrizer tuple, then $d_{j}$ divides $b_{ij}$ for every $j$ and every $i$. Applying Corollary \ref{cor} yields that $B$ can be realized by a species  admitting a non-degenerate potential. This completes the proof.
\end{proof}

\section*{Acknowledgements}
I thank Raymundo Bautista and Daniel Labardini-Fragoso for helpful conversations. I also thank the referee for his valuable suggestions and remarks that improved the quality of this paper.

\end{document}